\documentclass[12pt,reqno]{amsart}

\usepackage[T1]{fontenc}
\usepackage[utf8]{inputenc}
\usepackage[english]{babel}
\usepackage{amsthm,amssymb,amsmath}
\usepackage{hyperref}
\usepackage[top=1in, bottom=1in, left=1in, right=1in]{geometry}

\newtheorem{Theorem}{Theorem}[section]
\newtheorem{Lemma}[Theorem]{Lemma}
\newtheorem{Corollary}[Theorem]{Corollary}

\newtheorem{athm}{Theorem}

\theoremstyle{Definition}
\newtheorem{Definition}[Theorem]{Definition}

\newtheorem{Example}{Example}

\theoremstyle{Remark}

\DeclareMathOperator{\Aut}{Aut}

\title{A Schur--Zassenhaus Theorem for Finite Skew Braces}

\author[Marco Damele]{Marco Damele}
\thanks{Dipartimento di Matematica, Universit\`a di Cagliari, Via Ospedale 72, 09124 Cagliari, Italy; \texttt{marco.damele@unica.it}; ORCID 0009-0008-3088-5766}

\date{}

\begin{document}

\maketitle
\vspace{-1.8em}

\begin{abstract}
We prove a Schur--Zassenhaus theorem for finite skew braces. More precisely,
if \(B\) is a finite skew brace and \(I\) is an ideal of \(B\) such that
\(|I|\) and \(|B/I|\) are coprime, then \(I\) admits a complement in \(B\). As an application, we show that finite supersoluble skew braces are
Lagrangian, thereby recovering, for trivial skew braces, the classical
inverse Lagrange property for finite supersoluble groups.
\end{abstract}

\medskip

\noindent\textbf{2020 Mathematics Subject Classification.}
Primary 16T25; Secondary 20D20.

\medskip

\noindent\textbf{Keywords.}
Skew braces; Schur--Zassenhaus theorem; Hall subgroups; trifactorised
groups.

\section{Introduction}

Skew braces were introduced by Guarnieri and Vendramin \cite{GV} as a
non-abelian generalisation of braces, originally introduced by Rump
\cite{Rump} in connection with set-theoretic solutions of the Yang--Baxter
equation. They are also closely related to regular subgroups of holomorphs
and have applications to the study of Hopf--Galois structures.

A skew brace is a triple \((B,+,\cdot)\), where \(B\) is a 
set and both \((B,+)\) and \((B,\cdot)\) are groups satisfying the
compatibility condition
\[
a\cdot (b+c)=a\cdot b-a+a\cdot c
\]
for all \(a,b,c\in B\), where \(-a\) denotes the inverse of \(a\) with
respect to the operation \(+\).  This compatibility condition can be translated by saying that the multiplicative group \((B,\cdot)\) acts
on its additive group \((B,+)\) through the lambda map
\(\lambda:(B,\cdot)\to \Aut(B,+)\), defined by
\[
\lambda_a(b)=-a+a\cdot b
\]
for all \(a,b\in B\). This map is a group homomorphism, as shown in
\cite{GV}. \emph{From now on, all skew braces considered in this paper are assumed to be
finite.}  Every finite group \(G\) can be regarded as a skew brace by taking the two operations to coincide, namely by considering the trivial skew brace \((G,\cdot,\cdot)\). It is therefore natural to ask which classical results from finite group
theory admit analogues in the wider setting of finite skew braces. Among the
most fundamental results in finite group theory are Sylow's theorems and
Hall's theorem, which describe the existence and behaviour of subgroups of
prescribed prime-power or coprime order. The natural analogues of subgroups and normal subgroups in the setting of
skew braces are sub-skew braces and ideals, respectively. A sub-skew brace of \(B\) is
a subset which is a subgroup of both \((B,+)\) and \((B,\cdot)\). An ideal
is a sub-skew brace \(I\) which is normal in both groups and is invariant under
the maps \(\lambda_a\), for every \(a\in B\); equivalently, \(B/I\) is again
a skew brace. Consequently, the notions of Sylow sub-skew brace and Hall
\(\pi\)-sub-skew brace are the natural extensions of the corresponding notions
for finite groups: a Sylow \(p\)-sub-skew brace is a sub-skew brace whose order is the
largest power of \(p\) dividing \(|B|\), while a Hall \(\pi\)-sub-skew brace is a
sub-skew brace whose order is a \(\pi\)-number and whose index is a
\(\pi'\)-number.

The existence problem for Sylow sub-skew braces was first approached in
\cite{CDDMFT}, where Sylow and Hall type results were obtained for special
classes of finite skew braces. The existence of Sylow sub-skew braces was
later proved in full generality in \cite{Truman}, together with an analogue
of Cauchy's theorem. Under the additional assumption that both the additive
and the multiplicative groups are soluble, \cite{Truman} also proves an
analogue of the existence part of Hall's theorem. Shortly after,
\cite{BBPPCtrifactorised} showed the power of the trifactorised group
associated with a skew brace: the Sylow and Hall type results of
\cite{Truman} can be recovered from the corresponding Hall theory for finite
trifactorised groups. For the class of finite left nilpotent skew braces, a
stronger containment property was recently obtained in \cite{EGGK}: not only
do Sylow \(p\)-sub-skew braces exist, but every \(p\)-sub-skew brace is
contained in a Sylow \(p\)-sub-skew brace.

The aim of this note is to prove an analogue for finite skew braces of the
classical theorem of Schur and Zassenhaus. Recall that, in finite group
theory, this theorem asserts that a normal subgroup \(N\) of a finite group
\(G\) admits a complement whenever \(|N|\) and \(|G/N|\) are coprime
(see \cite[Theorem 6.2.1]{KurzweilStellmacher}). In this note, by a
complement of an ideal \(I\) in a skew brace \(B\) we mean a sub-skew brace
\(H\) of \(B\) such that \(I\cap H=\{0\}\), \((B,+)=I+H\), and
\((B,\cdot)=IH\). Complement problems for skew braces have already been
studied in \cite{RatheeYadav} by means of extensions and second cohomology.
The splitting results obtained there require several additional hypotheses.
Motivated by the trifactorised-group approach of
\cite{BBPPCtrifactorised}, we show that no further assumptions are needed
for coprime ideals of finite skew braces.

\begin{athm}\label{MainTheorem}
Let $B$ be a finite skew brace and let $I$ be an ideal of $B$.
If $|I|$ and $|B/I|$ are coprime, then $I$ admits a complement in $B$. 
\end{athm}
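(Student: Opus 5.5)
The plan is to pass to the trifactorised group attached to $B$, as in \cite{BBPPCtrifactorised}, apply the classical Schur--Zassenhaus theorem there, and then pull a suitably factorised complement back to $B$. Let $G=(B,+)\rtimes_\lambda(B,\cdot)$, and consider the three subgroups
\[
A=\{(a,0)\},\qquad M=\{(0,b)\},\qquad D=\{(b,b)\}.
\]
These satisfy $G=AM=AD=MD$, with pairwise trivial intersections, and $A\trianglelefteq G$.

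The dictionary I intend to use is the following. Suppose $K\le G$ and $H\subseteq B$ satisfy
\[
K\cap A=H\times 0,\qquad K\cap M=0\times H,\qquad K\cap D=\{(h,h):h\in H\}.
\]
Then $H$ is a subgroup of both $(B,+)$ and $(B,\cdot)$, hence a sub-skew brace. Moreover, $K\cap A\trianglelefteq K$ shows that $H$ is $\lambda_h$-invariant, although this is not even needed.

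Since $I$ is an ideal, $N=I\rtimes I=\{(x,y):x,y\in I\}$ is a normal subgroup of $G$. Its order is $|I|^2$ and $|G/N|=|B/I|^2$, so these are coprime. Writing $\pi$ for the set of primes dividing $|I|$, the subgroup $N$ is a normal Hall $\pi$-subgroup of $G$. Similarly:
\begin{itemize}
\item $N\cap A=I\times 0$, $N\cap M=0\times I$ and $N\cap D$ are normal Hall $\pi$-subgroups of $A$, $M$ and $D$ respectively;
\item by Schur--Zassenhaus, including the conjugacy part (one of the two orders is odd, so Feit--Thompson supplies the needed solubility), the Hall $\pi'$-subgroups of $G$ are exactly the complements of $N$, and they are all conjugate;
\item the same holds inside each of $A$, $M$ and $D$.
\end{itemize}

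The key step is to find a Hall $\pi'$-subgroup $K$ of $G$ that is \emph{prefactorised} with respect to all three factors simultaneously. Concretely, $K\cap A$, $K\cap M$ and $K\cap D$ should be Hall $\pi'$-subgroups of $A$, $M$ and $D$, so that $K=(K\cap A)(K\cap M)=(K\cap A)(K\cap D)=(K\cap M)(K\cap D)$ by order counting. I would mimic the Hall-theory argument for trifactorised groups in \cite{BBPPCtrifactorised}, using conjugacy of complements in place of solubility.
\begin{itemize}
\item Start with complements $X$ of $I\times0$ in $A$ and $Y$ of $0\times I$ in $M$. Since $A\trianglelefteq G$, a conjugacy argument in $XM$ modulo $I\times 0$ lets me adjust $Y$ by conjugation so that it normalises $X$. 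Then $K_0=XY$ is a $\pi'$-subgroup of order $|B/I|^2$, that is, a complement of $N$ that is prefactorised with respect to $A$ and $M$.
\item Next take a complement $Z$ of $N\cap D$ in $D$. It lies in some Hall $\pi'$-subgroup, which is a conjugate $K_0^{g}$. Write $g=am$ with $a\in A$, $m\in M$. Then $K_0^{g}=(X^{g})(Y^{g})$, where $X^g\le A$ and $Y^g=Y^{am}$. The issue is that $Y^{am}$ need not lie in $M$.
\end{itemize}
This is the main obstacle. To handle it, I would exploit that $K_0\cap A$ is normal in $K_0$ and that $G=K_0N$. This lets me choose $g\in N$, and then $g=(x,y)$ with $x,y\in I$. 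I would then use the coprime-action conjugacy of complements inside $(I\times 0)Y$ to replace $g$ by an element of $M$, while keeping both the $A$-part and the $M$-part of the factorisation. If this proves stubborn, I would first try an order-counting argument: look for a complement $K$ with $|K\cap A|\,|K\cap M|\,|K\cap D|\ge|K|^{3/2}$, which via the trifactorisation identities forces all three intersections to be full Hall subgroups.

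With such a $K$ in hand, it remains to check that the three intersections define the same subset $H\subseteq B$. The factorisation $K=(K\cap A)(K\cap M)$ gives $K=\{(a,b):a\in H_+,\ b\in H_\cdot\}$ for subgroups $H_+\le(B,+)$ and $H_\cdot\le(B,\cdot)$. The condition $|K\cap D|=|B/I|$ forces every element of $K\cap D$ to have the form $(h,h)$ with $h\in H_+\cap H_\cdot$, and hence $|H_+\cap H_\cdot|=|H_+|$, so $H_+=H_\cdot=:H$. Finally, $H$ is a sub-skew brace of order $|B/I|$ whose order is coprime to $|I|$, so $H\cap I=\{0\}$. Order counting then gives $I+H=B$ and $IH=B$, so $H$ is the required complement.
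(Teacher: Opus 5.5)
\textbf{Gap.} Your setup, the normal Hall subgroups $N$, $N\cap A$, $N\cap M$, $N\cap D$, and the final extraction of $H$ from a Hall $\pi'$-subgroup $K$ meeting $A$, $M$, $D$ in Hall subgroups all agree with the paper. The gap is the key step itself: you never produce such a $K$. You correctly locate the obstacle ($Y^{am}$ need not lie in $M$), but your proposed remedy does not remove it.

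Reducing to $g=(x,0)(0,y)\in N$ leaves exactly the same problem in the factor $(x,0)$, which genuinely moves $Y$ out of $M$. Conjugacy of complements of $I\times 0$ in $(I\times 0)Y$ only tells you that $Y^{(x,0)}$ is $(I\times 0)$-conjugate to $Y$, which you already know. Nothing here yields an $m\in M$ with $Z\le K_0^m$ for your fixed $Z$. The fallback of finding $K$ with $|K\cap A|\,|K\cap M|\,|K\cap D|\ge|K|^{3/2}$ merely restates the goal, since each intersection has order at most $|B/I|$. The paper does not do this step by hand. It verifies that $G$, $C=M$ and $D$ are $D_\pi$-groups (Lemma~\ref{Remark}, including the containment part) and then applies the Hall theorem for trifactorised groups, Theorem~\ref{Trifactorized}.

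\textbf{Repair.} Your route can be fixed cheaply: decompose the conjugating element along $G=MD$ rather than $G=AM$, and let $Z$ absorb the $D$-part. First, $Z\le K_0^g$ for some $g$ uses the containment part of the $D_{\pi'}$-property of $G$, which you assert without proof. It follows from conjugacy of complements of $N$ in $ZN$, as in Lemma~\ref{Remark}.

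Now write $g=md$ with $m\in M$ and $d\in D$. Then $Z^{d^{-1}}\le K_0^{m}=X^mY^m$, where:
\begin{itemize}
\item $X^m\le A$, since $A\trianglelefteq G$;
\item $Y^m\le M$;
\item $Z^{d^{-1}}$ is still a Hall $\pi'$-subgroup of $D$.
\end{itemize}
So $K_0^m$ is prefactorised with respect to all three factors, and your last paragraph completes the proof.

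Your step 1 is also loosely phrased ($XM$ is not a subgroup in general). Instead, $K_0$ can simply be any Hall $\pi'$-subgroup of $G$ containing $Y$: its intersection with the normal subgroup $A$ is automatically a Hall $\pi'$-subgroup of $A$, so $K_0=(K_0\cap A)Y$ by order.

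This direct argument relies on $A\cap M=A\cap D=M\cap D=1$ for the order counting. It is essentially the special case of Theorem~\ref{Trifactorized} that the paper actually needs.
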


The proof is inspired by the trifactorised-group approach of \cite{BBPPCtrifactorised}. We combine the classical Schur--Zassenhaus theorem with the Hall theorem for finite trifactorised groups. The proof uses the Feit--Thompson theorem to ensure that one of the two coprime factors is soluble, thereby allowing the conjugacy statement in the Schur--Zassenhaus theorem to be applied.

We stress that Theorem \ref{MainTheorem} concerns the existence of complements. A corresponding analogue of the conjugacy statement in the classical Schur–Zassenhaus theorem appears to be more delicate in the skew brace setting, since a complement must be compatible with both the additive and the multiplicative group structures. We do not address this question here.

After the proof of Theorem \ref{MainTheorem}, we also give an example showing that
one cannot expect a full analogue of the containment part of Sylow theory:
there exists a finite skew brace with a \(2\)-sub-skew brace which is not contained
in any Sylow \(2\)-sub-skew brace.

As an application, we use Theorem \ref{MainTheorem} to prove that every finite supersoluble
skew brace is Lagrangian, in the sense that it contains a sub-skew brace of
order \(d\) for every divisor \(d\) of its order. Since groups can be regarded
as trivial skew braces, this recovers the classical result that every finite
supersolvable group is Lagrangian.

\section{Proof of Theorem \ref{MainTheorem}}

Before proving the main theorem, we recall some standard definitions from
finite group theory. For a positive integer \(n\), we denote by \(\pi(n)\) the set of prime
divisors of \(n\). Let \(G\) be a finite group and let \(\pi\) be a set of
primes. A subgroup \(H\leq G\) is called a \(\pi\)-subgroup if every prime
divisor of \(|H|\) belongs to \(\pi\). A subgroup \(H\leq G\) is called a Hall \(\pi\)-subgroup of \(G\) if
\(\pi(|H|)\subseteq \pi\) and \(\pi([G:H])\cap \pi=\varnothing\).

\begin{Definition} \rm
  Let \(G\) be a finite group and let \(\pi\) be a set of primes. We say that
\(G\) satisfies the \(D_\pi\)-property, or that \(G\) is a \(D_\pi\)-group,
if the following two conditions hold:
\begin{enumerate}
    \item \(G\) has Hall \(\pi\)-subgroups;
    \item every \(\pi\)-subgroup of \(G\) is contained in a Hall
    \(\pi\)-subgroup of \(G\), and any two Hall \(\pi\)-subgroups of \(G\)
    are conjugate in \(G\).
\end{enumerate}  
\end{Definition}

Observe that the following consequence of the Schur--Zassenhaus theorem
will be used several times.

\begin{Lemma}\label{Remark}
Let \(X\) be a finite group and let \(M\unlhd X\) be a normal Hall
\(\pi'\)-subgroup. Then \(X\) satisfies the \(D_\pi\)-property.
\end{Lemma}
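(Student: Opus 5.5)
The plan is to derive each part of the \(D_\pi\)-property from the Schur--Zassenhaus theorem applied to \(M\unlhd X\), using the Feit--Thompson theorem to obtain solubility and hence the conjugacy part. Since \(M\) is a normal Hall \(\pi'\)-subgroup, \(|M|\) is a \(\pi'\)-number and \([X:M]\) is a \(\pi\)-number, so \(\gcd(|M|,|X/M|)=1\).

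\emph{Existence.} By the Schur--Zassenhaus theorem \cite[Theorem 6.2.1]{KurzweilStellmacher}, \(M\) has a complement \(H\) in \(X\). Then \(|H|=[X:M]\) is a \(\pi\)-number and \([X:H]=|M|\) is a \(\pi'\)-number, so \(H\) is a Hall \(\pi\)-subgroup of \(X\).

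\emph{Conjugacy.} Every Hall \(\pi\)-subgroup \(K\) of \(X\) is also a complement of \(M\). Indeed, \(K\cap M=1\) by coprimality of orders, and \(|K|=|X|_\pi=[X:M]\), so \(KM=X\). The conjugacy part of Schur--Zassenhaus requires that \(M\) or \(X/M\) be soluble. Since \(|M|\) and \(|X/M|\) are coprime, one of them has odd order and is therefore soluble by the Feit--Thompson theorem. Hence any two complements of \(M\), and so any two Hall \(\pi\)-subgroups of \(X\), are conjugate in \(X\).

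\emph{Containment.} Let \(U\) be a \(\pi\)-subgroup of \(X\). Then \(U\cap M=1\), and \(U\) is a complement of \(M\) in the subgroup \(UM\). On the other hand, \(H\cap UM\) is also a complement of \(M\) in \(UM\): by Dedekind's law \(UM=UM\cap HM=(UM\cap H)M\), and \((UM\cap H)\cap M=1\). The subgroup \(UM\) again has \(M\) as a normal subgroup of coprime index, and one of \(M\), \(UM/M\) has odd order, so Feit--Thompson again gives the solubility needed for conjugacy. Thus \(U=(H\cap UM)^g\) for some \(g\in UM\), and therefore \(U\le H^g\), which is a Hall \(\pi\)-subgroup.

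The main obstacle is the conjugacy part, since it depends on the solubility hypothesis. That hypothesis is supplied by Feit--Thompson, as the introduction anticipates; the remaining steps are routine order counting.
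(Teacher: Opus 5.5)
Your proof is correct and follows the paper's argument essentially step for step: Schur--Zassenhaus gives existence, Feit--Thompson supplies the solubility needed for conjugacy, and containment comes from conjugating $U$ onto $H\cap UM$ inside $UM$. The differences are cosmetic: you use Dedekind's law where the paper checks $UM=M(H\cap UM)$ elementwise, you reapply Feit--Thompson in $UM$ instead of inheriting solubility of $UM/M$ from $X/M$, and you explicitly check that every Hall $\pi$-subgroup is a complement of $M$, which the paper leaves implicit.
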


\begin{proof}
By the Feit--Thompson theorem (see the main theorem of \cite{FeitThompson1963}), at least one of \(M\) and \(X/M\) is
soluble. Indeed, since \(M\) is a Hall \(\pi'\)-subgroup of \(X\), the
integers \(|M|\) and \(|X/M|\) are coprime. Hence at least one of them is
odd. If \(|M|\) is odd, then \(M\) is soluble; if \(|X/M|\) is odd, then
\(X/M\) is soluble. By the Schur--Zassenhaus theorem \cite[Theorem~6.2.1]{KurzweilStellmacher},
\(M\) has a complement \(L\) in \(X\). Thus \(X=ML\) and \(M\cap L=1\), so
\(L\) is a Hall \(\pi\)-subgroup of \(X\). Moreover, since either \(M\) or
\(X/M\) is soluble, all complements of \(M\) in \(X\) are conjugate (again by \cite[Theorem~6.2.1]{KurzweilStellmacher}). Hence
all Hall \(\pi\)-subgroups of \(X\) are conjugate. 

It remains to show that every \(\pi\)-subgroup of \(X\) is contained in a
Hall \(\pi\)-subgroup of \(X\). Let \(P\leq X\) be a \(\pi\)-subgroup and
set \(Y=PM\). Since \(M\unlhd X\), the set \(Y\) is a subgroup of \(X\), and
\(M\unlhd Y\). Moreover, \(P\cap M=1\), because \(P\) is a \(\pi\)-group and
\(M\) is a \(\pi'\)-group. Hence \(P\) is a complement of \(M\) in \(Y\).

We now construct another complement of \(M\) in \(Y\) contained in \(L\). Set
\(L_0=L\cap Y\). We claim that \(Y=ML_0\). Indeed, let \(y\in Y\). Since
\(X=ML\), there exist \(m\in M\) and \(l\in L\) such that \(y=ml\). Since
\(y\in Y\) and \(m\in M\leq Y\), it follows that \(l=m^{-1}y\in Y\). Hence
\(l\in L\cap Y=L_0\), and so \(y\in ML_0\). Thus \(Y\subseteq ML_0\), while
the reverse inclusion is clear because \(M\leq Y\) and \(L_0\leq Y\).
Therefore \(Y=ML_0\). Moreover, \(M\cap L_0=1\), since \(L_0\leq L\) and
\(M\cap L=1\). Hence \(L_0\) is a complement of \(M\) in \(Y\).

Again, either \(M\) or \(Y/M\) is soluble: indeed, if \(M\) is soluble there
is nothing to prove, while if \(X/M\) is soluble then also \(Y/M\leq X/M\)
is soluble. Therefore the conjugacy part of the Schur--Zassenhaus theorem
applies inside \(Y\). Since \(P\) and \(L_0\) are two complements of \(M\) in
\(Y\), there exists \(y\in Y\) such that \(P^y=L_0\). Hence
\(P\leq L^{y^{-1}}\). Since \(L^{y^{-1}}\) is a Hall \(\pi\)-subgroup of
\(X\), the subgroup \(P\) is contained in a Hall \(\pi\)-subgroup of \(X\).

Consequently, every \(\pi\)-subgroup of \(X\) is contained in a Hall
\(\pi\)-subgroup of \(X\), and all Hall \(\pi\)-subgroups of \(X\) are
conjugate. Thus \(X\) satisfies the \(D_\pi\)-property.
\end{proof}

A group \(G\) is said to be \emph{trifactorised } if there exist
subgroups \(K,C,D\leq G\) such that
\(G=KC=KD=DC\).

\begin{Theorem}{\cite[theorem 2]{BBPPCtrifactorised}} \label{Trifactorized}
Let \(\pi\) be a set of primes, and let $G=KC=KD=DC$
be a trifactorised group, where \(K\unlhd G\). Assume that \(G\), \(C\), and
\(D\) satisfy the \(D_\pi\)-property. Then there exist Hall \(\pi\)-subgroups $K_\pi\leq K, C_\pi\leq C, D_\pi\leq D$
such that
\[
G_\pi=K_\pi C_\pi=K_\pi D_\pi=D_\pi C_\pi
\]
is a Hall \(\pi\)-subgroup of \(G\).
\end{Theorem}

We are now ready to prove Theorem \ref{MainTheorem}.

\begin{proof}[Proof of Theorem \ref{MainTheorem}]
Let \(\lambda:(B,\cdot)\to \Aut(B,+)\) be the \(\lambda\)-map of \(B\). Consider the semidirect product associated to this map, namely the group \(G=(B,+)\rtimes_{\lambda}(B,\cdot)\). Let \(K=\{(h,1):h\in B\}\), \(C=\{(0,h):h\in B\}\), and
\(D=\{(h,h):h\in B\}\). Then, as recalled in \cite{BBEPPC}, the group $G$ satisfies
\[
G=KC=KD=DC,
\]
with \(K\unlhd G\). Thus \(G\) is trifactorised.
Set \(a=|I|\) and \(b=|B/I|\), and let \(\pi=\pi(b)\). We claim that the groups \(G\), \(C\), and \(D\) satisfy the
\(D_\pi\)-property.

\begin{enumerate}
 
   \item For \(G\), consider the subgroup
\(N=(I,+)\rtimes_{\lambda}(I,\cdot)\) of
\(G=(B,+)\rtimes_{\lambda}(B,\cdot)\). Since \(I\) is an ideal of \(B\),
it is normal in both \((B,+)\) and \((B,\cdot)\), and it is invariant
under the maps \(\lambda_a\), for all \(a\in B\). Therefore
\(N\unlhd G\). Moreover, \(|N|=|I|^2=a^2\) and
\(|G/N|=|B/I|^2=b^2\). Hence \(N\) is a normal Hall \(\pi'\)-subgroup
of \(G\). Thus \(G\) satisfies the \(D_\pi\)-property by
Lemma~\ref{Remark}.

    \item For \(C=(B,\cdot)\), since \(I\) is an ideal of \(B\), the
    subgroup \(C_I=(I,\cdot)\) is normal in \(C\). Moreover,
    \(|C_I|=|I|=a\) and \(|C/C_I|=|B/I|=b\). Hence \(C_I\) is a normal Hall
    \(\pi'\)-subgroup of \(C\). Thus \(C\) satisfies
    the \(D_\pi\)-property by Lemma~\ref{Remark}. 
  
    \item For \(D\), let \(D_I\) be the diagonal subgroup associated to
    \(I\). Since \(I\) is an ideal of \(B\), we have \(D_I\unlhd D\).
    Moreover, \(D_I\) is a normal Hall \(\pi'\)-subgroup of \(D\), with
    \(|D_I|=a\) and \(|D/D_I|=b\). Thus \(D\)
    satisfies the \(D_\pi\)-property by Lemma~\ref{Remark}. 
\end{enumerate}

Therefore the trifactorised group \(G=KC=KD=DC\) satisfies the hypotheses
of Theorem~\ref{Trifactorized}. Hence there exist Hall
\(\pi\)-subgroups \(K_\pi\leq K\), \(C_\pi\leq C\), and \(D_\pi\leq D\)
such that $G_\pi=K_\pi C_\pi=K_\pi D_\pi=D_\pi C_\pi$
is a Hall \(\pi\)-subgroup of \(G\). We now verify explicitly that, under the natural identification
\(K\simeq (B,+)\), the subgroup \(K_\pi\) corresponds to a sub-skew brace of
\(B\). This is not a consequence of \(K_\pi\leq K\) alone; it uses the full
trifactorisation
\(G_\pi=K_\pi C_\pi=K_\pi D_\pi=D_\pi C_\pi\). Define
\begin{align*}
H_K &= \{h\in B : (h,1)\in K_\pi\},\\
H_C &= \{h\in B : (0,h)\in C_\pi\},\\
H_D &= \{h\in B : (h,h)\in D_\pi\}.
\end{align*}
We claim that the three subsets \(H_K\), \(H_C\), and \(H_D\) of \(B\)
coincide.

Let \(h\in H_D\). By definition, \((h,h)\in D_\pi\). Since
\(D_\pi\leq G_\pi\) and \(G_\pi=K_\pi C_\pi\), we have
\((h,h)\in K_\pi C_\pi\). Hence there exist elements \(k\in B\) and
\(c\in B\) such that \((k,1)\in K_\pi\), \((0,c)\in C_\pi\), and
\[
(h,h)=(k,1)(0,c)=(k,c).
\]
Thus \(k=h\) and \(c=h\). Therefore
\((h,1)\in K_\pi\) and \((0,h)\in C_\pi\), that is, \(h\in H_K\cap H_C\).
This proves that \(H_D\subseteq H_K\cap H_C\).
We now compare the orders. The map \(B\to K\), \(h\mapsto (h,1)\), is a
bijection, and by definition it restricts to a bijection
\(H_K\to K_\pi\). Hence \(|H_K|=|K_\pi|\). Similarly, the maps
\(h\mapsto (0,h)\) and \(h\mapsto (h,h)\) give bijections
\(H_C\to C_\pi\) and \(H_D\to D_\pi\), respectively. Therefore
\[
|H_K|=|K_\pi|,\qquad |H_C|=|C_\pi|,\qquad |H_D|=|D_\pi|.
\]
Since \(K_\pi\), \(C_\pi\), and \(D_\pi\) are Hall \(\pi\)-subgroups of
\(K\), \(C\), and \(D\), respectively, and since
\(|K|=|C|=|D|=|B|=ab\), we have
\[
|K_\pi|=|C_\pi|=|D_\pi|=b.
\]
Consequently,
\[
|H_K|=|H_C|=|H_D|=b.
\]
Since \(H_D\subseteq H_K\) we
obtain \(H_D=H_K\). Similarly, from \(H_D\subseteq H_C\) and
\(|H_D|=|H_C|\), we obtain \(H_D=H_C\). Hence
\[
H_K=H_C=H_D.
\] Let \(H\) denote this common subset of \(B\). Since \(H=H_K\), the subset \(H\) is a subgroup of \((B,+)\). Since \(H=H_C\), the subset \(H\) is also a
subgroup of \((B,\cdot)\). Hence
\(H\) is a sub-skew brace of \(B\) of order $b$.
Thus \(B\) contains a
sub-skew brace of order \(b=|B/I|\). 
We now show that \(H\) is a complement of \(I\). Since
\(|H|=|B/I|\) and \(|I|\) is coprime to \(|B/I|\), the subgroups
\(I\) and \(H\) have coprime orders. Hence \(I\cap H=\{0\}\).
Moreover, since \(I\trianglelefteq (B,+)\), the set \(I+H\) is a
subgroup of \((B,+)\), and
\(|I+H|=|I||H|/|I\cap H|=|B|\). Thus \((B,+)=I+H\).
Similarly, since \(I\trianglelefteq (B,\cdot)\), the set \(IH\) is a
subgroup of \((B,\cdot)\), and
\(|IH|=|I||H|/|I\cap H|=|B|\). Thus \((B,\cdot)=IH\).
Therefore \(H\) is a complement of \(I\) in  \(B\).

\end{proof}

\begin{Example} \rm
We show that, in general, a \(\pi\)-sub-skew brace of a finite skew brace need
not be contained in a Hall \(\pi\)-sub-skew brace. Let \(B=S_3\times C_2\).
We write the elements of \(B\) as pairs \((g,i)\), where \(g\in S_3\) and
\(i\in C_2=\{0,1\}\). Fix a transposition \(s\in S_3\), and denote by
\(\cdot\) the direct product operation on \(B\). Define a second operation
\(\circ\) on \(B\) by
\begin{equation*}
(g,i)\circ(h,j)=(g s^i h s^{-i},i+j).
\end{equation*}
Observe that \((B,\circ)\) is the semidirect product
\(S_3\rtimes C_2\), where the non-trivial element of \(C_2\) acts on 
\(S_3\) by conjugation by \(s\). We first explain why \((B,\cdot,\circ)\) is a skew brace. Let
\(\theta\in \Aut(B,\cdot)\) be defined by
\(\theta(h,j)=(shs^{-1},j)\). Then
\(\theta^i(h,j)=(s^i h s^{-i},j)\), and therefore
\begin{equation*}
\lambda_{(g,i)}(h,j)
:=(g,i)^{-1_{\cdot}}\cdot \bigl((g,i)\circ(h,j)\bigr)
=(s^i h s^{-i},j)
=\theta^i(h,j).
\end{equation*}
Thus \(\lambda_{(g,i)}=\theta^i\). Since the second coordinate of
\((g,i)\circ(h,j)\) is \(i+j\), we get
\begin{equation*}
\lambda_{(g,i)\circ(h,j)}
=\theta^{i+j}
=\theta^i\theta^j
=\lambda_{(g,i)}\lambda_{(h,j)}.
\end{equation*}
Hence \(\lambda:(B,\circ)\to \Aut(B,\cdot)\) is a group homomorphism.
By the standard criterion for skew braces, see
\cite[Proposition~1.9]{GV}, the triple \((B,\cdot,\circ)\) is a skew
brace.
We consider the prime \(2\). Since \(|B|=12=2^2\cdot 3\), a Hall
\(\{2\}\)-sub-skew brace, equivalently a Sylow \(2\)-sub-skew brace, has
order \(4\). The Sylow \(2\)-subgroups of \((B,\cdot)\) are
\begin{equation*}
P_t=\{(1,0),(1,1),(t,0),(t,1)\},
\end{equation*}
where \(t\) runs through the transpositions of \(S_3\). We claim that
\(P_t\) is a sub-skew brace if and only if \(t=s\). Indeed, suppose that
\(P_t\) is closed under \(\circ\). Since \((1,1),(t,0)\in P_t\), we must
have
\begin{equation*}
(1,1)\circ(t,0)=(sts^{-1},1)\in P_t.
\end{equation*}
This forces \(sts^{-1}=t\). Among the transpositions of \(S_3\), this holds
if and only if \(t=s\), because the centralizer of a transposition in
\(S_3\) is generated by that transposition. Conversely, if \((s^a,i),(s^b,j)\in P_s\), with \(a,b,i,j\in\{0,1\}\), then
\[
(s^a,i)\circ(s^b,j)=(s^a s^i s^b s^{-i},i+j)=(s^{a+b},i+j)\in P_s,
\]
so \(P_s\) is closed under \(\circ\). Therefore \(P_s\) is the unique Hall
\(\{2\}\)-sub-skew brace of \(B\).
Now let \(t\neq s\) be another transposition and set
\begin{equation*}
A=\{(1,0),(t,0)\}.
\end{equation*}
Then \(A\) is a \(2\)-sub-skew brace of \(B\). Indeed, \(A\) is a subgroup
of \((B,\cdot)\), and on \(A\) the operation \(\circ\) coincides with
\(\cdot\), because all elements of \(A\) have second coordinate \(0\). In
particular, \((t,0)\circ(t,0)=(t^2,0)=(1,0)\). Thus \(A\) is a
sub-skew brace of order \(2\). However, \(A\) is not contained in the unique
Hall \(\{2\}\)-sub-skew brace \(P_s\), since \((t,0)\notin P_s\). Hence
\(B\) contains a \(\{2\}\)-sub-skew brace which is not contained in any Hall
\(\{2\}\)-sub-skew brace.
\end{Example}

\section{Supersoluble skew braces and the Lagrangian property}

In this section we record a consequence of Theorem \ref{MainTheorem} for supersoluble skew
braces. We use the notion of supersolubility for skew braces as in
\cite{BBEFPT}. Thus a finite skew brace \(B\) is called supersoluble if it
has a series of ideals
\[
0=B_0\leq B_1\leq \cdots \leq B_n=B
\]
such that \(|B_i/B_{i-1}|\) is a prime number for every \(i=1,\ldots,n\).

We first recall the following elementary fact.

\begin{Lemma}
Let \(B\) be a finite supersoluble skew brace. Then every minimal non-zero
ideal of \(B\) has prime order.
\end{Lemma}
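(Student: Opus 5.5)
The plan is to intersect a supersoluble series of \(B\) with the given minimal ideal and read off a prime-order factor. Let \(J\) be a minimal non-zero ideal of \(B\), and fix a series of ideals
\[
0=B_0\leq B_1\leq \cdots \leq B_n=B
\]
with \(|B_i/B_{i-1}|\) prime for every \(i\). For each \(i\), set \(J_i=J\cap B_i\). An intersection of ideals is again an ideal, since each of the defining conditions is preserved under intersections: being a subgroup of both groups, normality in both groups, and \(\lambda_a\)-invariance. Hence every \(J_i\) is an ideal of \(B\) contained in \(J\). Minimality of \(J\) then forces \(J_i\in\{0,J\}\) for every \(i\).

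Since \(J_0=0\) and \(J_n=J\neq 0\), there is a smallest index \(k\geq 1\) with \(J_k=J\). For this \(k\) we have \(J\leq B_k\) and \(J\cap B_{k-1}=J_{k-1}=0\).

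Next I would compare \(J\) with the prime-order factor \(B_k/B_{k-1}\). Consider the additive group \((B_k,+)\). Both \(J\) and \(B_{k-1}\) are normal subgroups of it, and they meet trivially. Hence the natural map
\[
(J,+)\to (B_k/B_{k-1},+)
\]
is injective, because its kernel is \(J\cap B_{k-1}=0\). Therefore \(|J|\) divides \(|B_k/B_{k-1}|\), which is a prime \(p\). Since \(J\neq 0\), we conclude \(|J|=p\).

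The only point that needs any care is checking that \(J\cap B_i\) is an ideal, and this is immediate from the definition. I do not expect a genuine obstacle. Note that we never needed \(J+B_{k-1}=B_k\): injectivity of the map already bounds \(|J|\) by \(p\).
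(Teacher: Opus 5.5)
Your proof is correct and follows essentially the paper's argument: intersect the minimal ideal with the supersoluble series, take the first index where the intersection becomes non-zero, and embed that intersection into the prime-order factor \(B_k/B_{k-1}\). The only difference is cosmetic: you use minimality at the start to get \(J\cap B_i\in\{0,J\}\), whereas the paper uses it at the end to conclude \(I=I\cap B_j\).
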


\begin{proof}
Let
\[
0=B_0\leq B_1\leq \cdots \leq B_n=B
\]
be a supersoluble series of ideals of \(B\), with \(|B_i/B_{i-1}|\) prime for
every \(i\). Let \(I\) be a minimal non-zero ideal of \(B\). Choose \(j\) minimal such that \(I\cap B_j\neq 0\). Then
\(I\cap B_{j-1}=0\). Since \(I\) and \(B_j\) are ideals of \(B\), also
\(I\cap B_j\) is an ideal of \(B\). Moreover \(I\cap B_j\) embeds into
\(B_j/B_{j-1}\). Indeed,
\[
(I\cap B_j)\cap B_{j-1}=I\cap B_{j-1}=0.
\]
Since \(B_j/B_{j-1}\) has prime order and \(I\cap B_j\neq 0\), it follows that
\[
|I\cap B_j|=|B_j/B_{j-1}|
\]
is prime. Finally, \(I\cap B_j\) is a non-zero ideal of \(B\) contained in
\(I\). By the minimality of \(I\), we have \(I=I\cap B_j\). Hence \(|I|\) is
prime.
\end{proof}

We now introduce the analogue of the inverse Lagrange property.

\begin{Definition}
A finite skew brace \(B\) is said to be \emph{Lagrangian} if, for every
divisor \(d\) of \(|B|\), there exists a sub-skew brace of \(B\) of order
\(d\).
\end{Definition}

The proof of the following corollary is the standard group-theoretic argument, with Theorem \ref{MainTheorem} replacing the classical Schur--Zassenhaus theorem.

\begin{Corollary}
Every finite supersoluble skew brace is Lagrangian.
\end{Corollary}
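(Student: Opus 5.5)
We argue by induction on \(|B|\). The statement is trivial if \(|B|=1\), and we assume it for all supersoluble skew braces of smaller order. Two standard facts will be needed:
\begin{enumerate}
\item quotients of supersoluble skew braces by ideals are supersoluble;
\item the sub-skew braces of \(B/I\) correspond bijectively, via preimage under the canonical projection, to the sub-skew braces of \(B\) containing \(I\).
\end{enumerate}
For the first fact, if \(0=B_0\leq\cdots\leq B_n=B\) is a supersoluble series, then the images \((B_i+I)/I\) form a series of ideals of \(B/I\). Each factor is a quotient of \(B_i/B_{i-1}\), so it has order \(1\) or a prime, and deleting repetitions gives a supersoluble series. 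The second fact follows because preimages of subgroups are subgroups in both \((B,+)\) and \((B,\cdot)\).

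Let \(d\) be a divisor of \(|B|\) with \(B\neq 0\). Take a minimal non-zero ideal \(I\) of \(B\). By the preceding Lemma, \(|I|=p\) for some prime \(p\). We consider two cases.

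\emph{Case 1: \(p\mid d\).} Then \(d/p\) divides \(|B/I|\). Since \(B/I\) is supersoluble of smaller order, induction gives a sub-skew brace \(\bar H\) of \(B/I\) of order \(d/p\). Its preimage \(H\) in \(B\) is a sub-skew brace containing \(I\), and \(|H|=p\cdot d/p=d\).

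\emph{Case 2: \(p\nmid d\).} Write \(|B|=p^k m\) with \(p\nmid m\), so that \(d\) divides \(m\). The idea is to pass to a sub-skew brace of order \(m\) and apply induction there. This step is the main obstacle, because the ideal \(I\) of order \(p\) is generally not coprime to \(B/I\), and it is the coprime hypothesis that Theorem~\ref{MainTheorem} requires. We try first to use a Hall \(p'\)-sub-skew brace. One route is the following.

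Choose instead an ideal \(J\) of \(B\) with \(|J|=p^k\). If such a \(J\) exists, then \(|J|\) and \(|B/J|=m\) are coprime, so Theorem~\ref{MainTheorem} gives a complement \(H\) of order \(m\). This \(H\) is a proper sub-skew brace, since \(p\mid |B|\). Supersolubility passes to sub-skew braces by intersecting the series: each \(H\cap B_i\) is an ideal of \(H\), and the factors \((H\cap B_i)/(H\cap B_{i-1})\) embed into \(B_i/B_{i-1}\). Induction on \(H\) then yields a sub-skew brace of order \(d\).

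The difficulty is producing \(J\), since a normal Sylow \(p\)-ideal need not exist for an arbitrary \(p\). To handle this, we would instead pick \(p\) to be the \emph{largest} prime divisor of \(|B|\). This uses the analogue of the classical fact that a supersoluble group has a normal Sylow subgroup for its largest prime divisor. The plan is to refine the supersoluble series of ideals, via quotients and induction, so that the ideals whose factors have order \(p\) come first. This would give an ideal \(J\) of order \(p^k\).

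If that reordering proves too delicate, we would use the alternative of inducting on \(B/I\). Choose \(p\) as the largest prime divisor, and by induction obtain an ideal of \(B/I\) of order \(p^{k-1}\); its preimage is an ideal of order \(p^k\). The difficulty is then ensuring the induction hypothesis is strengthened to include the statement ``there is an ideal of order \(p^{k}\) for the largest prime \(p\).'' This strengthened claim would have to be proved alongside the Lagrangian property, by establishing the reordering of prime-order factors in the series for ideals.

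In either route, the key steps are: the closure properties (quotients and sub-skew braces remain supersoluble); the existence of a normal Sylow ideal for a suitable prime, which is the hardest step; and the application of Theorem~\ref{MainTheorem} followed by induction on the complement.
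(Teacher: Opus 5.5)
Your Case 1 matches the paper's argument. Case 2 has a genuine gap: the existence of an ideal $J$ of order $p^k$ is only a plan. You propose reordering the series so that the order-$p$ factors come first, or strengthening the induction hypothesis, and you flag this as the hardest step without carrying it out. There is also an inconsistency. Your case split was made with respect to $p=|I|$ for an arbitrary minimal ideal $I$, but in Case 2 you switch to $p$ being the largest prime divisor of $|B|$. Case 1 would then have to be redone with a minimal ideal of that particular order, for instance one contained in $J$, and you do not address this.

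The missing idea is that no Sylow ideal is needed: in Case 2 you should also induct on $B/I$. Since $p\nmid d$ and $d\mid |B|=p\,|B/I|$, $d$ divides $|B/I|$. Induction then gives a sub-skew brace $\overline K$ of $B/I$ of order $d$. Its preimage $K$ is a sub-skew brace of order $pd$ containing $I$, and $I$ is an ideal of $K$ with $(|I|,|K/I|)=(p,d)=1$. Theorem~\ref{MainTheorem} applied to $K$, not to $B$, gives a complement of $I$ in $K$ of order $d$. You correctly observed that $I$ need not be coprime to $B/I$; the paper's remedy is to shrink the ambient skew brace rather than enlarge the ideal.

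Your route can also be completed, but the argument is absent from your proposal:
\begin{enumerate}
\item Since every $B_i$ is an ideal, $(B,+)$ and $(B,\cdot)$ are supersoluble groups.
\item So for the largest prime $p$, the Sylow $p$-subgroup $P$ of $(B,+)$ is normal, hence characteristic, hence $\lambda$-invariant.
\item Then $P$ is closed under $x\cdot y=x+\lambda_x(y)$, so it coincides with the normal Sylow $p$-subgroup of $(B,\cdot)$.
\item Thus $P$ is an ideal, and choosing the minimal ideal $I\leq P$ makes both of your cases work.
\end{enumerate}
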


\begin{proof}
We argue by induction on \(|B|\). If \(|B|=1\), there is nothing to prove.
Assume that \(B\neq 0\), and let \(d\) be a divisor of \(|B|\). Since \(B\) is supersoluble, by the previous lemma \(B\) has a minimal non-zero
ideal \(I\) of prime order, say \(|I|=p\). Moreover \(B/I\) is again
supersoluble. Suppose first that \(p\mid d\). Then \(d/p\) divides \(|B/I|\). By induction,
there exists a sub-skew brace \(\overline H\) of \(B/I\) of order \(d/p\).
Let \(H\) be the inverse image of \(\overline H\) under the natural projection
\(B\to B/I\). Then \(H\) is a sub-skew brace of \(B\), it contains \(I\), and
\[
|H|=|I|\,|\overline H|=p\cdot d/p=d.
\]
Suppose now that \(p\nmid d\). Then \(d\) divides \(|B/I|\). By induction,
there exists a sub-skew brace \(\overline K\) of \(B/I\) of order \(d\). Let
\(K\) be the inverse image of \(\overline K\) in \(B\). Then \(K\) is a
sub-skew brace of \(B\), it contains \(I\), and
\[
|K|=|I|\,|\overline K|=pd.
\]
Furthermore, \(I\) is an ideal of \(K\). Since
\[
(|I|,|K/I|)=(p,d)=1,
\]
Theorem A applied to the finite skew brace \(K\) and to its ideal \(I\) gives
a sub-skew brace \(L\) of \(K\) such that
\[
K=I+L=I\circ L,\qquad I\cap L=0.
\]
In particular,
\[
|L|=|K/I|=d.
\]
Thus in both cases \(B\) contains a sub-skew brace of order \(d\). Therefore
\(B\) is Lagrangian.
\end{proof}

\end{document}